\newtheorem{thm}{Theorem}
\newtheorem{lemma}[thm]{Lemma}
\newtheorem{cor}[thm]{Corollary}
\theoremstyle{definition}
\DeclareMathOperator{\rank}{rank}
\newsavebox{\fmbox}
\newcommand{\reals}{\ensuremath{\mathbb{R}}}
\newlength{\dyindent}
\newenvironment{dy}{\refstepcounter{equation}\begin{list}{}%
{\setlength{\leftmargin}{\dyindent}\setlength{\labelwidth}{\dyindent}%
\addtolength{\labelwidth}{-\labelsep}}%
\item[{\rm (\theequation)\hfill}]}%
{\end{list}}
\newenvironment{dy*}{\refstepcounter{equation}\begin{list}{}%
{\setlength{\leftmargin}{\dyindent}\setlength{\labelwidth}{\dyindent}%
\addtolength{\labelwidth}{-\labelsep}}%
\item}%
{\end{list}}
\DeclareMathOperator{\mr}{mr}
\begin{document}
\global\long\def\lspan{\mbox{span}}
\global\long\def\I{\mbox{}{I}}

\title{The inverse inertia problem for the complements of partial $k$-trees}

\author{Hein van der Holst  \\
Department of Mathematics and Statistics \\
Georgia State University \\
Atlanta, GA 30303, USA\\
E-mail: hvanderholst@gsu.edu
}
\maketitle
\begin{abstract}
Let $\mathbb{F}$ be an infinite field with characteristic different from two. 
For a graph $G=(V,E)$ with $V=\{1,\ldots,n\}$, let $S(G;\mathbb{F})$ be the
set of all symmetric $n\times n$ matrices $A=[a_{i,j}]$ over $\mathbb{F}$ with $a_{i,j}\not=0$,
$i\not=j$ if and only if $ij\in E$. We show that if $G$ is the
complement of a partial $k$-tree and $m\geq k+2$, then for all nonsingular symmetric $m\times m$ matrices $K$ over $\mathbb{F}$, there exists an $m\times n$ matrix $U$ such that $U^T K U\in S(G;\mathbb{F})$. As a corollary we obtain that, if $k+2\leq m\leq n$ and $G$ is the complement of a partial $k$-tree, then for any two nonnegative integers $p$ and $q$ with $p+q=m$, there exists a matrix in $S(G;\reals)$ with 
$p$ positive and $q$ negative eigenvalues. 
\end{abstract}

\noindent keywords: graph, inertia, symmetric, graph complement, treewidth\newline
MSC: 05C05, 15A03

\section{Introduction}
Let $\mathbb{F}$ be a field. For a graph $G=(V,E)$ with $V=\{1,\ldots,n\}$, let $S(G;\mathbb{F})$ be the set of all symmetric $n\times n$ matrices $A=[a_{i,j}]$ over $\mathbb{F}$ with $a_{i,j}\not=0$,
$i\not=j$ if and only if $ij\in E$. We write $S(G)$ for $S(G;\mathbb{R})$. Consider the following problem for a given graph $G$: For which symmetric matrices $A$ over $\mathbb{F}$ does there exists a matrix $U$ such that $U^T A U\in S(G; \mathbb{F})$? This problem includes the inverse inertia problem for graphs. The inverse inertia problem of graphs has been introduced and studied by Barrett, Hall, and Loewy \cite{BarHalLoe2008a}, and asks for which pairs $(p,q)$ of nonnegative integers, there exists a matrix $A\in S(G)$ with $p$ positive and $q$ negative eigenvalues. The inverse inertia problem of graphs includes the problems of determining the minimum rank and minimum semidefinite rank of graphs.

The \emph{minimum rank} of $G$ over a field $\mathbb{F}$, denoted $\mr(G; \mathbb{F})$, is defined as
$$\mr(G;\mathbb{F}) = \min\{\rank(A)~|~A\in S(G;\mathbb{F})\},$$
and the \emph{minimum semidefinite rank} of $G$, denoted $\mr_{+}(G)$, is defined
as 
$$\mr_{+}(G)=\min\{\rank(A)~|~A\in S(G),A\text{ is positive semidefinite}\}.$$
We write $\mr(G)$ for $\mr(G; \mathbb{R})$. Clearly, $\mr(G)\leq \mr_+(G)$. 
For many classes of graphs a combinatorial characterization of the minimum rank has been established. For example, Johnson and Leal Duarte \cite{JD99} showed that the minimum rank of a tree equals the minimum number of disjoint paths needed to cover all vertices of the tree. Barrett, Loewy, and van der Holst \cite{MR2111528, MR2202431} gave for any field $\mathbb{F}$ a combinatorial characterization of the class of graphs $G$ with $\mr(G;\mathbb{F})\leq 2$. Also for the class of complement of trees, the minimum rank has been determined \cite{AIMMinimumRankSpecialGraphsWorkGroup20081628}. Hogben \cite{Hogben20082560} determined the minimum rank of the complements of $2$-trees. Sinkovic and van der Holst \cite{HolSin2010a} showed that the minimum semidefinite rank of the complement of a partial $k$-trees is at most $k+2$ (see below the definition of partial $k$-tree). See Hogben and Fallat \cite{Fallat2007558} for a survey on the minimum rank problem. 

Before stating the main result in this paper, we need to introduce some notions.
If $G=(V,E)$ is a graph, the complement of $G$ is $\overline{G}=(V,\overline{E})$,
where $\overline{E}=\{vw~|~v,w\in V,v\not=w\text{ and }vw\not\in E\}$. If $S\subseteq V$, then the induced subgraph of $G$ induced by $S$ is the subgraph of $G$ with vertex set $S$ and edge set $\{ij\in E~|~i,j \in S\}$.
A \emph{$k$-tree} is defined recursively as follows.
\begin{enumerate}
\item A complete graph with $k+1$ vertices is a $k$-tree.
\item If $G=(V,E)$ is a $k$-tree and $v_{1},\ldots,v_{k}$ form a clique in $G$ with $k$ vertices, then $G'=(V\cup\{v\},E\cup\{v_{i}v~|~1\leq i\leq k\})$, with $v$ a new vertex, is a $k$-tree.
\end{enumerate}
A \emph{partial $k$-tree} is a subgraph of a $k$-tree. A graph has tree-width $\leq k$ if it is a partial $k$-tree. We refer to Bodlaender \cite{bodlaender98a} for a survey on tree-width and to Diestel \cite{Diestel} for notation and terminology used in graph theory.

In this paper we show that if $G$ is the complement of a partial $k$-tree and $K$ is a nonsingular symmetric $m\times m$ matrix $K$ over an infinite field $\mathbb{F}$ with characteristic unequal to two, and $m\geq k+2$, then there exists a matrix $U$ such that $U^T K U\in S(G; \mathbb{F})$. Furthermore, if $G$ has $n$ vertices, then $\rank(U) = \min(m,n)$. Our result extends the result of Sinkovic and van der Holst \cite{HolSin2010a}.

\section{Symmetric bilinear forms}

Let $V$ be a finite dimensional vector space over a field $\mathbb{F}$ with characteristic different from $2$. A \emph{symmetric bilinear form} on $V$ is a map $B: V\times V\to \mathbb{F}$ satisfying
\begin{enumerate}
\item $B(u,v) = B(v,u)$ for all $u,v\in V$,
\item $B(u+v,w) = B(u,w) + B(v,w)$ for all $u,v,w\in V$,
\item $B(\lambda u,v) = \lambda B(u,v)$ for all $\lambda\in \mathbb{F}$ and all $u,v\in V$.
\end{enumerate}
Let $\mathcal{E} = \{x_1,\ldots,x_n\}$ be a basis for $V$. Define the matrix $K=[k_{i,j}]$ by $k_{i,j} = B(x_i,x_j)$.
Denoting the coordinate vector of a vector $x$ relative to $\mathcal{E}$ by $[x]_{\mathcal{E}}$, we have $B(u,v) = [u]_{\mathcal{E}}^T K [v]_{\mathcal{E}}$. Conversely, if $K$ is a symmetric $n\times n$ matrix over $\mathbb{F}$, then the map $B : V\times V\to \mathbb{F}$ defined by $B(u,v) = [u]_{\mathcal{E}}^T K [v]_{\mathcal{E}}$ is a symmetric bilinear form on $V$.

A \emph{B-orthogonal representation} of a graph $G=(V,E)$ in $V$ is a mapping $v\rightarrow\overrightarrow{v}$, $v\in V$, such that for distinct vertices $v$ and $w$,
$B(\overrightarrow{v},\overrightarrow{w}) = 0$ if and only if $v$ and $w$ are non-adjacent. In case $B$ is the standard inner product on $\mathbb{R}^k$, it is easy to verify that a graph $G$ has a $B$-orthogonal representation if and only if $\mr_+(G)\leq k$. 

A symmetric bilinear form $B$ on $V$ is \emph{nondegenerate} if $B(v,w) = 0$ for all $w\in V$ implies that $v=0$. If $\{x_1,\ldots,x_n\}$ is a set of vectors of $V$, then we say that $\{x_1,\ldots,x_n\}$ is \emph{nondegenerate} if
\begin{equation*}
\det\begin{bmatrix}
B(x_1,x_1) & \cdots & B(x_1,x_n)\\
\vdots & \ddots & \vdots\\
B(x_n,x_1) & \cdots & B(x_n,x_n)
\end{bmatrix}\not= 0.
\end{equation*}
If $\{x_1,\ldots,x_n\}$ is a basis for $V$, then $B$ is nondegenerate if and only if $\{x_1,\ldots,x_n\}$ is nondegenerate.
If $W$ is a subspace of $V$, we say that $W$ is nondegenerate if the restriction of $B$ to $W$ is nondegenerate.

Let $B$ be a symmetric bilinear form on $V$. 
If $W$ is a subspace of $V$, we define the \emph{orthogonal complement} of $W$ by
\begin{equation*}
W^{\perp} = \{x\in V~|~B(x,y)=0\text{ }\forall  y \in W\}.
\end{equation*}
A vector $v\in V$ is called \emph{anisotropic} if $B(v,v)\not=0$ and \emph{isotropic} if $v\not=0$ and $B(v,v)=0$. 

The following three lemmas will be used in the proof of the main theorem. We skip the proofs.

\begin{lemma}
If $B$ is a nondegenerate symmetric bilinear form on $V$, then for any linear subspace $W$ of $V$,
\begin{equation*}
\dim W + \dim W^\perp = \dim V,
\end{equation*}
and 
\begin{equation*}
(W^{\perp})^{\perp} = W.
\end{equation*}
\end{lemma}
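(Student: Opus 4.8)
The plan is to exploit the nondegeneracy of $B$ to identify $V$ with its dual space. Define the linear map $\phi\colon V\to V^{*}$ by letting $\phi(v)$ be the linear functional $w\mapsto B(v,w)$. Nondegeneracy of $B$ says exactly that $\ker\phi = 0$, so $\phi$ is injective, and since $V$ is finite dimensional with $\dim V = \dim V^{*}$, $\phi$ is an isomorphism. For a subspace $W\subseteq V$, let $W^{0} = \{f\in V^{*} : f(w)=0 \text{ for all } w\in W\}$ be its annihilator; the standard dimension formula for annihilators gives $\dim W^{0} = \dim V - \dim W$.

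Next I would observe that, directly from the definitions, $v\in W^{\perp}$ if and only if $\phi(v)\in W^{0}$, that is, $W^{\perp} = \phi^{-1}(W^{0})$. Since $\phi$ is an isomorphism it follows that $\dim W^{\perp} = \dim W^{0} = \dim V - \dim W$, which is the first identity. (Alternatively, one can argue entirely with matrices: fixing a basis for $V$ and letting the columns of an $n\times d$ matrix $M$ form a basis of $W$, one has $W^{\perp} = \ker(M^{T}K)$; since $K$ is nonsingular and $\rank M = d$, the matrix $M^{T}K$ has rank $d$, so its kernel has dimension $n-d$.)

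For the second identity, the inclusion $W\subseteq (W^{\perp})^{\perp}$ is immediate from the definition of the orthogonal complement: if $w\in W$, then $B(w,y) = 0$ for every $y\in W^{\perp}$, so $w\in (W^{\perp})^{\perp}$. Applying the dimension formula just established twice, $\dim (W^{\perp})^{\perp} = \dim V - \dim W^{\perp} = \dim V - (\dim V - \dim W) = \dim W$. A subspace contained in another subspace of the same finite dimension must coincide with it, so $(W^{\perp})^{\perp} = W$.

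There is essentially no serious obstacle here; the only point that requires care is to invoke nondegeneracy precisely at the step where $\phi$ is shown to be injective and hence bijective. Without that hypothesis $W^{\perp}$ can be strictly larger than $\dim V - \dim W$, and both claimed identities fail, so the argument genuinely rests on that one use of the hypothesis; everything else is routine linear algebra.
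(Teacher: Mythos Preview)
Your proof is correct and is the standard argument via the isomorphism $\phi\colon V\to V^{*}$ induced by $B$. The paper does not actually prove this lemma---it lists it among three preliminary lemmas with the remark ``We skip the proofs''---so there is no proof in the paper to compare your approach against; your write-up would serve perfectly well as the omitted argument.
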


\begin{lemma}
Let $B$ be a symmetric bilinear form on $V$. If $W$ is a nondegenerate subspace $W$ of $V$, then $V = W\oplus W^\perp$.
\end{lemma}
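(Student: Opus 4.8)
The plan is to verify directly the two defining conditions of an internal direct sum, namely $W\cap W^{\perp}=\{0\}$ and $W+W^{\perp}=V$. For the first condition, suppose $x\in W\cap W^{\perp}$. Since $x\in W^{\perp}$ we have $B(x,y)=0$ for every $y\in W$; since also $x\in W$ and the restriction of $B$ to $W$ is nondegenerate, this forces $x=0$. Hence $W\cap W^{\perp}=\{0\}$, and in particular $\dim(W+W^{\perp})=\dim W+\dim W^{\perp}$.

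For the second condition I would argue by a dimension count. Consider the linear map $\Phi\colon V\to W^{*}$ sending a vector $v$ to the linear functional $u\mapsto B(v,u)$ on $W$. By definition the kernel of $\Phi$ is exactly $W^{\perp}$, so $\dim W^{\perp}=\dim V-\dim(\operatorname{im}\Phi)\geq \dim V-\dim W^{*}=\dim V-\dim W$. Combining this with the identity from the previous paragraph gives $\dim(W+W^{\perp})=\dim W+\dim W^{\perp}\geq \dim V$, and therefore $W+W^{\perp}=V$, which together with $W\cap W^{\perp}=\{0\}$ yields $V=W\oplus W^{\perp}$. Alternatively, one can make the decomposition explicit: nondegeneracy of $B|_{W}$ says precisely that the restriction of $\Phi$ to $W$ is an isomorphism onto $W^{*}$, so for each $v\in V$ there is a (unique) $w\in W$ with $B(w,u)=B(v,u)$ for all $u\in W$; then $v-w\in W^{\perp}$ and $v=w+(v-w)$.

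I do not expect a genuine obstacle here, as this is a standard fact about bilinear forms; the one point requiring care is that $B$ is \emph{not} assumed to be nondegenerate on all of $V$, so the previous lemma (which presupposes $B$ nondegenerate on $V$) cannot be invoked directly. Everything must instead be phrased in terms of the map $\Phi\colon V\to W^{*}$ (equivalently, the restricted form $B|_{W}$), whose nondegeneracy \emph{is} available by hypothesis.
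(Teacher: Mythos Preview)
Your proof is correct, but there is nothing in the paper to compare it to: the paper explicitly states ``We skip the proofs'' for this lemma and the two lemmas surrounding it. Your argument is the standard one, and your remark that Lemma~1 cannot be invoked here (since $B$ is not assumed nondegenerate on all of $V$) is exactly the right caveat; routing the dimension count through the map $\Phi\colon V\to W^{*}$, or equivalently through the explicit projection $v\mapsto w$ you describe, is the correct way to handle this.
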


\begin{lemma}\label{lem:nondegenerate}
Let $B$ be a nondegenerate symmetric bilinear form on $V$, and let $W$ be a subspace of $V$. Then $W$ is nondegenerate if and only if $W^\perp$ is nondegenerate.
\end{lemma}

The next lemma allows us to reduce the number of cases in the proof of the main theorem.

\begin{lemma}\label{lem:orth}
Let $B$ be a nondegenerate symmetric bilinear form on $V$.
Let $K,L$ be linear subspaces of $V$ with $\dim K=\dim L$.
Then $\dim(L\cap K^{\perp})=\dim(L^{\perp}\cap K)$. \end{lemma}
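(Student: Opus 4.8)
The plan is to use dimension counting via Lemma~1. Since $B$ is nondegenerate, Lemma~1 gives $\dim W + \dim W^{\perp} = \dim V$ for every subspace $W$, and in particular $\dim K^{\perp} = \dim V - \dim K$ and $\dim L^{\perp} = \dim V - \dim L$; because $\dim K = \dim L$ we get $\dim K^{\perp} = \dim L^{\perp}$. So it suffices to relate $\dim(L\cap K^{\perp})$ to the dimensions of $L$, $K^{\perp}$, and their sum, and then to do the symmetric computation for $L^{\perp}\cap K$.

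First I would apply the standard modular (inclusion–exclusion) identity for subspaces,
\begin{equation*}
\dim(L\cap K^{\perp}) = \dim L + \dim K^{\perp} - \dim(L + K^{\perp}).
\end{equation*}
The key step is to compute $\dim(L + K^{\perp})$. Using Lemma~1 again, $\dim(L+K^{\perp}) = \dim V - \dim\big((L+K^{\perp})^{\perp}\big)$, and since orthogonal complement turns sums into intersections (that is, $(L+K^{\perp})^{\perp} = L^{\perp}\cap (K^{\perp})^{\perp} = L^{\perp}\cap K$, using $(K^{\perp})^{\perp}=K$ from Lemma~1), we obtain
\begin{equation*}
\dim(L + K^{\perp}) = \dim V - \dim(L^{\perp}\cap K).
\end{equation*}
Substituting this back, and using $\dim K^{\perp} = \dim V - \dim K$, yields
\begin{equation*}
\dim(L\cap K^{\perp}) = \dim L + (\dim V - \dim K) - \dim V + \dim(L^{\perp}\cap K) = \dim L - \dim K + \dim(L^{\perp}\cap K).
\end{equation*}
Since $\dim K = \dim L$, the first two terms cancel and we conclude $\dim(L\cap K^{\perp}) = \dim(L^{\perp}\cap K)$, as desired.

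The only slightly delicate point is justifying the identity $(L + K^{\perp})^{\perp} = L^{\perp} \cap (K^{\perp})^{\perp}$; this is the elementary fact that a vector is $B$-orthogonal to a sum of two subspaces precisely when it is $B$-orthogonal to each of them, which holds for any symmetric bilinear form and requires no nondegeneracy. All the heavier facts ($\dim W + \dim W^{\perp} = \dim V$ and $(W^{\perp})^{\perp} = W$) are exactly the content of Lemma~1 and may be quoted directly, so there is no real obstacle here — the proof is a short dimension count. One could alternatively phrase the whole argument by symmetry: the computation above shows $\dim(L\cap K^{\perp}) - \dim(L^{\perp}\cap K) = \dim L - \dim K$ for \emph{all} subspaces $K,L$, and then specializing to $\dim K = \dim L$ gives the claim.
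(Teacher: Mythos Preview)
Your proof is correct and follows essentially the same route as the paper's: both use the modular dimension identity $\dim(L\cap K^{\perp}) + \dim(L+K^{\perp}) = \dim L + \dim K^{\perp}$, the relation $(L+K^{\perp})^{\perp} = L^{\perp}\cap K$ (the paper states the equivalent $(L^{\perp}\cap K)^{\perp} = L+K^{\perp}$), and Lemma~1 to convert everything into a dimension count. Your version is just more explicit and also isolates the slightly more general identity $\dim(L\cap K^{\perp}) - \dim(L^{\perp}\cap K) = \dim L - \dim K$.
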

\begin{proof}
We have \[
\dim V =\dim L+\dim K^{\perp}=\dim(L\cap K^{\perp})+\dim(L+K^{\perp}),\]
 and $(L^{\perp}\cap K)^{\perp}=L+K^{\perp}$. Hence $\dim(L\cap K^{\perp})=\dim(L^{\perp}\cap K)$.
\end{proof}

\section{The proof of the main theorem}

In the proof of our main theorem, we need a generic position argument. For this, we will use Lemma~\ref{lem:polysubspacenonzero}. In the proof of Lemma~\ref{lem:polysubspacenonzero}, we will use the following lemma; see \cite{MR1417938} for a proof of this lemma.
\begin{lemma}\label{lem:polynonzero}
Let $P(x_1,\ldots,x_n)$ be a nonzero polynomial over an infinite field  $\mathbb{F}$. Then there exist elements $a_1,\ldots,a_n \in \mathbb{F}$ such that $P(a_1,\ldots,a_n)\not=0$. 
\end{lemma}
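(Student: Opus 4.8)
The plan is to prove this by induction on the number of variables $n$, reducing everything to the one-variable case, which in turn rests on the familiar fact that a nonzero polynomial in one variable over a field has only finitely many roots.

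For the base case $n=1$, write $P(x_1)$ as a nonzero polynomial of degree $d$. Factoring out linear factors repeatedly shows that $P$ has at most $d$ roots in $\mathbb{F}$; since $\mathbb{F}$ is infinite it has strictly more than $d$ elements, so some $a_1\in\mathbb{F}$ satisfies $P(a_1)\neq 0$. This is the only place where the hypothesis that $\mathbb{F}$ is infinite enters, and it is genuinely needed: over a finite field the polynomial $\prod_{a\in\mathbb{F}}(x_1-a)$ is a nonzero element of $\mathbb{F}[x_1]$ that nevertheless vanishes at every point.

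For the inductive step, assume the statement holds for polynomials in $n-1$ variables. Regard $P$ as a polynomial in $x_n$ with coefficients in $\mathbb{F}[x_1,\ldots,x_{n-1}]$, say $P=\sum_{i=0}^{d}P_i(x_1,\ldots,x_{n-1})\,x_n^{\,i}$, where $d$ is chosen to be the largest index for which $P_d$ is a nonzero polynomial; such an index exists because $P\neq 0$. By the induction hypothesis there are $a_1,\ldots,a_{n-1}\in\mathbb{F}$ with $P_d(a_1,\ldots,a_{n-1})\neq 0$. Substituting these values produces the one-variable polynomial $Q(x_n):=P(a_1,\ldots,a_{n-1},x_n)\in\mathbb{F}[x_n]$, whose coefficient of $x_n^{\,d}$ is $P_d(a_1,\ldots,a_{n-1})\neq 0$; hence $Q$ is a nonzero polynomial. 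Applying the base case to $Q$ yields $a_n\in\mathbb{F}$ with $Q(a_n)\neq 0$, that is, $P(a_1,\ldots,a_n)\neq 0$, which closes the induction.

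There is no substantial obstacle here; the only point that requires attention is to keep the distinction between a polynomial being the zero element of $\mathbb{F}[x_1,\ldots,x_n]$ and its being identically zero as a function, and to make sure the ``leading coefficient'' $P_d$ selected in the inductive step is genuinely a nonzero polynomial, so that the induction hypothesis legitimately applies to it.
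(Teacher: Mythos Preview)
Your proof is correct and is the standard induction-on-$n$ argument for this fact. The paper itself does not supply a proof of this lemma; it simply cites an external reference, so there is no in-paper argument to compare against.
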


If $L$ is a subspace of $\mathbb{F}^n$ and $P(x_1,\ldots,x_n)$ is a polynomial, then we say that $P(x_1,\ldots,x_n)$ is \emph{nonzero on $L$} if there exists a vector $a\in L$ such that $P(a_1,\ldots,a_n)\not=0$.

\begin{lemma}\label{lem:polysubspacenonzero}
Let $\mathbb{F}$ be an infinite field and $L$ a subspace of $\mathbb{F}^n$. If the polynomials $P_1(x_1,\ldots,x_n),\ldots,P_k(x_1,\ldots,x_n)$ are nonzero on $L$, then there exists a vector $a\in L$ such that $P_i(a_1,\ldots,a_n)\not=0$ for $i=1,\ldots,k$.
\end{lemma}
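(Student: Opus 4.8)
The plan is to reduce the statement for a subspace $L \subseteq \mathbb{F}^n$ to the case $L = \mathbb{F}^n$ already handled by Lemma~\ref{lem:polynonzero}, and then combine the several polynomials into a single one. First I would pick a basis $b_1,\ldots,b_d$ of $L$ and consider the linear parametrization $\varphi: \mathbb{F}^d \to L$ sending $(t_1,\ldots,t_d)$ to $\sum_j t_j b_j$; each coordinate $x_i$ of a point of $L$ is a linear form $\ell_i(t_1,\ldots,t_d)$ in the parameters. Substituting, each $P_i(x_1,\ldots,x_n)$ becomes a polynomial $Q_i(t_1,\ldots,t_d) := P_i(\ell_1(t),\ldots,\ell_n(t))$ over $\mathbb{F}$ in $d$ variables. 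The hypothesis that $P_i$ is nonzero on $L$ says precisely that there is a point $a^{(i)} \in L$, hence a parameter value $t^{(i)} \in \mathbb{F}^d$ with $\varphi(t^{(i)}) = a^{(i)}$, at which $P_i(a^{(i)}) \neq 0$; thus $Q_i$ is a nonzero polynomial in $\mathbb{F}[t_1,\ldots,t_d]$ for each $i$.

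Next I would form the product $Q(t_1,\ldots,t_d) := Q_1(t)\cdots Q_k(t)$. Since $\mathbb{F}$ is a field, $\mathbb{F}[t_1,\ldots,t_d]$ is an integral domain, so a product of nonzero polynomials is nonzero; hence $Q$ is a nonzero polynomial over the infinite field $\mathbb{F}$. By Lemma~\ref{lem:polynonzero} there exist $c_1,\ldots,c_d \in \mathbb{F}$ with $Q(c_1,\ldots,c_d) \neq 0$, and therefore $Q_i(c_1,\ldots,c_d) \neq 0$ for every $i = 1,\ldots,k$. Setting $a := \varphi(c_1,\ldots,c_d) \in L$, we have $a_j = \ell_j(c_1,\ldots,c_d)$ for each $j$, so $P_i(a_1,\ldots,a_n) = Q_i(c_1,\ldots,c_d) \neq 0$ for all $i$, which is exactly the desired conclusion.

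There is essentially no serious obstacle here; the only point requiring a moment's care is the bookkeeping of the substitution, namely that composing a polynomial with linear forms yields a polynomial and that "nonzero on $L$" translates faithfully into "$Q_i$ is a nonzero element of $\mathbb{F}[t_1,\ldots,t_d]$." One should also note that if $L = \{0\}$ the statement is vacuous unless every $P_i$ has nonzero constant term, in which case the argument still goes through with $d = 0$ and $\varphi$ constant; and the infiniteness of $\mathbb{F}$ is used only through Lemma~\ref{lem:polynonzero}, so no additional hypotheses are needed.
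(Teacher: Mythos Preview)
Your proof is correct and follows essentially the same approach as the paper: parametrize $L$ by a linear map from $\mathbb{F}^d$ (the paper writes this as an $n\times r$ matrix $B$ with column space $L$), pull each $P_i$ back to a nonzero polynomial $Q_i$ on $\mathbb{F}^d$, take the product, and apply Lemma~\ref{lem:polynonzero}. Your write-up is slightly more explicit about why the product is nonzero and about the trivial case $L=\{0\}$, but the argument is the same.
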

\begin{proof}
Let $r = \dim(L)$ and $B$ an $n\times r$ matrix with full column rank such that the column space of $B$ is equal to $L$. Since each $P_i(x_1,\ldots,x_n)$ is nonzero on $L$, there exists a vector $u\in \mathbb{F}^r$ such that $P_i(B u)$ is nonzero. Define $Q_i(y_1,\ldots,y_r) = P^i(B y)$ for $i=1,\ldots,k$. Then $Q_i(y_1,\ldots,y_r)$, $i=1,\ldots,k$, are nonzero polynomials. Let $Q(y_1,\ldots,y_r) = \prod_{i=1}^k Q_i(y_1,\ldots,y_r)$. Then, by Lemma~\ref{lem:polynonzero}, there exists a vector $c \in \mathbb{F}^r$ such that $Q(c) \not= 0$. Let $a = B c$. Then $a\in L$ and $P_i(a)\not=0$ for $i=1,\ldots,k$.
\end{proof}

We have now come to our main theorem. 

\begin{thm}\label{thm:orth1}
Let $\mathbb{F}$ be an infinite field and let $k$ and $m$ be positive integers with $k+2\leq m$. If $G=(V,E)$ is a partial $k$-tree, then for any nondegenerate symmetric bilinear form $B$ on $\mathbb{F}^{m}$, there is a mapping $v\rightarrow\overrightarrow{v}$, $v\in V$, into $\mathbb{F}^{m}$ such that 
\begin{enumerate}
\item $B(\overrightarrow{v},\overrightarrow{w})=0$ if and only if $v\not=w$ and $v$ is adjacent to $w$.
\item $\lspan(\{\overrightarrow{v}~\mid~v\in V(G)\})$ is a subspace of $\mathbb{F}^m$ with dimension $\min(m, |V(G)|)$.
\end{enumerate}
\end{thm}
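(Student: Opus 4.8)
The plan is to induct on $|V(G)|$, building the orthogonal representation one vertex at a time while following a $k$-tree construction order for (a supergraph of) $G$. Since every partial $k$-tree is a subgraph of a $k$-tree, and adding edges to $G$ only makes the adjacency-orthogonality constraint harder (more pairs must be $B$-orthogonal), it suffices to prove the statement for $k$-trees themselves: an orthogonal representation of a $k$-tree $H\supseteq G$ is automatically one of $G$, because non-adjacency in $G$ of a pair adjacent in $H$ would require $B(\overrightarrow v,\overrightarrow w)=0$, whereas non-adjacency in both graphs is fine — wait, that is backwards. Let me restate: we want $B(\overrightarrow v,\overrightarrow w)=0 \iff vw\in E(G)$. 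If we realize the $k$-tree $H$, edges of $H\setminus G$ would be forced orthogonal but should be non-orthogonal. So instead the induction should be run directly on $G$ using a partial-$k$-tree decomposition: order $V(G)=\{v_1,\dots,v_n\}$ so that, in the ambient $k$-tree, each $v_j$ ($j>k+1$) attaches to a $k$-clique among $\{v_1,\dots,v_{j-1}\}$; then in $G$ itself $v_j$ has at most $k$ neighbors among its predecessors, and all of $v_j$'s $G$-neighbors among predecessors lie in a set $C_j$ of size $\le k$.

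The inductive step is the heart of the argument. Suppose $\overrightarrow{v_1},\dots,\overrightarrow{v_{j-1}}$ have been constructed satisfying both conclusions for the induced subgraph $G[\{v_1,\dots,v_{j-1}\}]$; in particular $W_{j-1}:=\lspan(\overrightarrow{v_1},\dots,\overrightarrow{v_{j-1}})$ has dimension $\min(m,j-1)$. We must choose $\overrightarrow{v_j}\in\mathbb F^m$ with: (i) $B(\overrightarrow{v_j},\overrightarrow{v_i})=0$ for every predecessor $v_i$ that is a $G$-neighbor of $v_j$ — at most $k$ linear conditions, cutting out a subspace $N$ of dimension $\ge m-k\ge 2$; (ii) $B(\overrightarrow{v_j},\overrightarrow{v_i})\neq 0$ for every predecessor $v_i$ that is a $G$-non-neighbor of $v_j$ — finitely many open (polynomial-nonvanishing) conditions; and (iii) the dimension condition: if $j\le m$ then $\overrightarrow{v_j}\notin W_{j-1}$, and if $j>m$ no new condition. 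For (iii) when $j\le m$: since $\dim W_{j-1}=j-1<m$, and we need a vector in $N$ (dimension $\ge m-k$) avoiding $W_{j-1}$; because $B$ is nondegenerate we can analyze $N\cap W_{j-1}$ via Lemma~\ref{lem:orth}-type dimension counts, and the constraint set $C_j$ on which we impose orthogonality has $\le k\le m-2$ elements, so $N$ cannot be contained in $W_{j-1}$ when $j-1<m$ — this needs a short dimension argument showing $\dim N>\dim(N\cap W_{j-1})$, equivalently that not every vector $B$-orthogonal to the clique-neighbors of $v_j$ lies in $W_{j-1}$.

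The generic-position finish uses Lemma~\ref{lem:polysubspacenonzero}: the admissible vectors form the subspace $N$ (or $N$ minus $W_{j-1}$, itself describable as $N$ with one extra nonvanishing polynomial — a suitable maximal minor witnessing linear independence), and conditions (ii) and the $j\le m$ part of (iii) are each the nonvanishing of a polynomial $P_i(x)$ (namely $x\mapsto B(x,\overrightarrow{v_i})$, or a minor) that is \emph{not identically zero on $N$}. That each such polynomial is nonzero on $N$ is exactly where the hypotheses $m\ge k+2$ and nondegeneracy of $B$ are used: one must check that for each non-neighbor $v_i$, the linear functional $x\mapsto B(x,\overrightarrow{v_i})$ does not vanish on all of $N$ — i.e. $\overrightarrow{v_i}\notin N^\perp = \lspan(\overrightarrow{v_\ell}:v_\ell\in C_j)^{\perp\perp}=\lspan(C_j\text{-vectors})$ plus radical corrections — using that $v_i\notin C_j$ and the induction hypothesis that $\{\overrightarrow{v_\ell}\}$ on the relevant induced subgraph is already a faithful representation with the right span dimension. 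Once each $P_i$ is nonzero on $N$, Lemma~\ref{lem:polysubspacenonzero} produces a single $\overrightarrow{v_j}\in N$ satisfying all of them simultaneously, completing the step.

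I expect the main obstacle to be the bookkeeping in showing the polynomials are nonzero on $N$: one must simultaneously track that $N$ (defined by $\le k$ orthogonality constraints) stays large enough, that it is not swallowed by $W_{j-1}$ when $j\le m$, and that no forbidden functional $B(\cdot,\overrightarrow{v_i})$ vanishes on it — all of which hinge on keeping the induction hypothesis strong enough (faithfulness \emph{and} the exact span dimension $\min(m,j-1)$, and possibly that certain sub-configurations remain nondegenerate). Getting the right inductive invariant so that these dimension counts close up, using Lemmas~\ref{lem:nondegenerate} and~\ref{lem:orth} for the $W\leftrightarrow W^\perp$ transfers, is the delicate part; the slack $m-k\ge 2$ is what makes it possible.
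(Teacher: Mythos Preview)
Your overall strategy---order the vertices along a $k$-tree construction sequence, place $\overrightarrow{v_j}$ in $N=\lspan(C_j)^\perp$, and invoke Lemma~\ref{lem:polysubspacenonzero} to dodge finitely many bad loci---is exactly the paper's. The gap is that the inductive invariant you carry (just the two conclusions of the theorem) is too weak to close the loop, as you yourself suspect in your final paragraph.

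Concretely: since $B$ is nondegenerate on all of $\mathbb F^m$, we have $N^\perp=(\lspan(C_j)^\perp)^\perp=\lspan(C_j)$ exactly (no ``radical corrections''), so showing $B(\cdot,\overrightarrow{v_i})\not\equiv 0$ on $N$ amounts to showing $\overrightarrow{v_i}\notin\lspan(C_j)$. When $j-1\le m$ this follows from your invariant, because then all previous vectors are linearly independent. But once $j-1>m$---the interesting regime---your hypothesis says only that the previous vectors span $\mathbb F^m$; nothing prevents some particular $\overrightarrow{v_i}$ from lying in the span of the $\le k$ vectors indexed by $C_j$. Likewise, the claim $N\not\subseteq W_{j-1}$ (for $j\le m$) is not a pure dimension count: if $N\subseteq W_{j-1}$ then $W_{j-1}^\perp\subseteq\lspan(C_j)\cap\lspan(C_j)^\perp$, which forces $j-1=m$ only if $\lspan(C_j)$ (or an enclosing clique span) is \emph{nondegenerate}---a fact your invariant does not supply.

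The paper repairs this by strengthening the induction with three extra conditions tracked on the ambient $k$-tree $H$ (not just $G$): (i) for every clique $C$ of $H$, $\lspan(C)$ is nondegenerate of dimension $|C|$; (ii) for every pair of $k$-cliques $C,D$ of $H$, $\dim(\lspan(C)^\perp\cap\lspan(D))\le 1$; and (iii) for every $k$-clique $C$ of $H$ and vertex $v\notin C$, $\overrightarrow v\notin\lspan(C)$. Condition (i) supplies the nondegeneracy needed for the span-growth step and for producing an anisotropic test vector in $\lspan(Q)^\perp$; condition (iii) is exactly ``$\overrightarrow{v_i}\notin\lspan(C_j)$'' (since $C_j$ sits inside the $k$-clique $Q$ of $H$); and condition (ii) is what allows (iii) to be propagated to the \emph{new} $k$-cliques $Q_i=(Q\setminus\{v_i\})\cup\{z\}$ created when the new vertex $z$ is added. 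With these invariants in hand, all the polynomial-nonvanishing checks you outline go through, and Lemma~\ref{lem:polysubspacenonzero} finishes the step precisely as you describe.
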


\begin{proof}
In the proof, for any $S\subseteq V(G)$ we abbreviate $\lspan(\{\overrightarrow{v}~\mid~v\in S\})$ by $\lspan(S)$.
We prove a stronger statement: for every $k$-tree $H$ and every spanning subgraph $G$ of $H$, there is a mapping $v\rightarrow\overrightarrow{v}$, $v\in V(H)$, into $\mathbb{F}^{m}$ that satisfies the following 
conditions:
\begin{enumerate}[(1)]
\item \label{item:cond1} for all vertices $v,w$ of $G$, $B(\overrightarrow{v}, \overrightarrow{w}) = 0$ if and only if $v\not=w$ and $v$ is adjacent to $w$ in $G$.

\item\label{item:cond2} For every clique $C$ of $H$, $\lspan(C)$ is nondegenerate with dimension $|C|$. 

\item \label{item:cond3} For every pair of $k$-cliques $C,D$ of $H$, $\lspan(C)^{\perp}\cap\lspan(D)$ has dimension at most one. 

\item \label{item:cond4} For every $k$-clique $C$ of $H$ and vertex $v$ of $H$ with $v\not\in C$,
$\overrightarrow{v}\not\in\lspan(C)$.

\item\label{item:cond5} $\lspan(V(H))$ is a subspace of $\mathbb{F}^m$ with dimension $\min(m, |V(H)|)$.
\end{enumerate}

Notice that from Lemma~\ref{lem:orth} it follows that Condition \ref{item:cond3} is a condition on unordered pairs $C,D$ only.
The proof is by induction on the number of vertices in $H$. The basis of the induction is where $H=K_{k+1}$. It is easily verified that in this case there exists a mapping $v\to\overrightarrow{v}$, $v\in V(H)$,
that satisfies the Conditions \ref{item:cond1} - \ref{item:cond5}.

Now assume that the theorem is true for all $k$-trees $H$ with at most $n$ vertices and all subgraphs $G$ of $H$.
Let $H'$ be a $k$-tree with $n+1$ vertices and let $G'$ be a subgraph of $H'$. 
Let $z$ be a vertex of degree $k$ in $H'$ and let $Q=\{v_1,\ldots,v_k\}$ be the set of vertices in $H'$ adjacent to $z$. For $i=1,\ldots,k$, define $Q_{i}=(Q\setminus\{v_{i}\})\cup \{z\}$.

By induction there exists a mapping $v\rightarrow\overrightarrow{v}$, $v\in V(H)$, into $\mathbb{F}^{m}$ that satisfies the Conditions \ref{item:cond1} - \ref{item:cond5} for $H'\setminus \{z\}$ and $G'\setminus \{z\}$. We will show that we can extend this mapping to a mapping $v\rightarrow \overrightarrow{v}$, $v\in V(H')$, such that the Conditions \ref{item:cond1} - \ref{item:cond5} hold for $H'$ and $G'$. For this we need to assign to vertex $z$ a vector $\overrightarrow{z}\in L:=\lspan(N_{G'}(z))^{\perp}$ such that the following hold:
\begin{enumerate}[(a)]
\item For each clique $C$ of $H'$ containing $z$, $\lspan(C)$ is  nondegenerate with dimension $|C|$.
\item For each $Q_i$ and each $k$-clique $D$ of $H$, $\lspan(Q_i)^\perp\cap \lspan(D)$ has dimension at most one.
\item For each $Q_i$ and $Q_j$, $\lspan(Q_i)^\perp\cap \lspan(Q_j)$ has dimension at most one.
\item For each $k$-clique $C$ of $H$, $\overrightarrow{z}\not\in\lspan(C)$.
\item For each $Q_i$ and vertex $w\not\in Q_i$, $\overrightarrow{w}\not\in \lspan(Q_i)$.
\item If $|V(H)|<m$, then $\overrightarrow{z}\not\in \lspan(V(H))$.
\end{enumerate}

Clearly $\lspan(Q)^\perp\subseteq L$. Since $\lspan(Q)$ has dimension $k$, $\overrightarrow{v_i}\not\in \lspan(N_{G'}(z))$ if $v_i$ is nonadjacent to $z$. Hence $L\not\subseteq \lspan(v_i)^{\perp}$ if $v_i$ is nonadjacent to $z$. For each vertex $w$ of $G$ with $w\not\in Q$, 
$\overrightarrow{w}\not\in \lspan(N_{G'}(z))$ for otherwise $\overrightarrow{w}\in \lspan(Q)$, contradicting Condition~\ref{item:cond4}. Hence for each vertex $w$ of $G$ with $w\not\in Q$, $L\not\subseteq \lspan(w)^\perp$.
Hence there exists a nonzero polynomial $P^1(x)$ such that if $P^1(\overrightarrow{z})\not=0$, then $\overrightarrow{z}\not\in \lspan(w)^{\perp}$ for all $w\not\in N_{G'}(z)$, $w\not=z$.

Next we show that
\begin{dy}\label{dy:cliquenondegenerate}
For each clique $C$ of $H'$ containing $z$, there exists a nonzero polynomial $P^2_C(x)$ on $L$ such that if $P^2_C(\overrightarrow{z})\not=0$, then $\lspan(C)$ is a nondegenerate subspace of $\mathbb{F}^{m}$ with dimension $|C|$.
\end{dy}
Let $C=\{w_1,\ldots,w_t,z\}$ be a clique of $H'$ containing $z$.  Define 
\begin{equation*}
P_C^2(x) = \det\begin{bmatrix}
B(\overrightarrow{w_1},\overrightarrow{w_1}) & \cdots & B(\overrightarrow{w_1},\overrightarrow{w_t}) & B(\overrightarrow{w_1},x)\\
\vdots & \ddots & \vdots & \vdots\\
B(\overrightarrow{w_t},\overrightarrow{w_1}) & \cdots & B(\overrightarrow{w_t},\overrightarrow{w_t}) & B(\overrightarrow{w_t},x)\\
B(x,\overrightarrow{w_1}) & \cdots & B(x,\overrightarrow{w_t}) & B(x,x)
\end{bmatrix}.
\end{equation*}
Notice that $P^2_C(x)$ is a polynomial in the components of the vector $x$. The subspace $\lspan(C)$ is nondegenerate if and only if $P^2_C(\overrightarrow{z})\not=0$. To see that $P^2_C(x)$ is nonzero polynomial on $L$, first notice that $\lspan(Q)^\perp$ is nondegenerate as $\lspan(Q)$ is nondegenerate. Hence there exists an anisotropic vector $u \in \lspan(Q)^\perp$. Then $B(\overrightarrow{w_i},u) = 0$ for $i=1,\ldots,t$ and $B(u,u)\not=0$. Thus
\begin{equation*}
P_C^2(u) = \det\begin{bmatrix}
B(\overrightarrow{w_1},\overrightarrow{w_1}) & \cdots & B(\overrightarrow{w_1},\overrightarrow{w_t}) & 0\\
\vdots & \ddots & \vdots & \vdots\\
B(\overrightarrow{w_t},\overrightarrow{w_1}) & \cdots & B(\overrightarrow{w_t},\overrightarrow{w_t}) & 0\\
0 & \cdots & 0 & B(u,u)
\end{bmatrix},
\end{equation*}
which is nonzero on $\lspan(Q)^\perp$ because $\lspan(C\setminus\{z\})$ is nondegenerate. Hence $P^2_C(x)$ is a nonzero polynomial on $\lspan(Q)^\perp$. As $\lspan(Q)^\perp$ is a subspace of $L$, $P^2_C(x)$ is a nonzero polynomial on $L$.

Next we show that:
\begin{dy*}
For each $Q_i$ and each $k$-clique $D$ of $H$, there exists a nonzero polynomial $P_{Q_i,D}^3(x)$ such that if $P_{Q_i,D}^3(\overrightarrow{z})\not=0$, then $\lspan(Q_i)^\perp\cap \lspan(D)$ has dimension at most one.
\end{dy*}

Notice that, by Condition~\ref{item:cond3}, $\lspan(Q)^\perp\cap \lspan(D)$ has dimension at most one. Suppose first $\lspan(Q)^\perp\cap \lspan(D)$ contains a nonzero vector $h$.
Then $\lspan(Q)^\perp\not\subseteq \lspan(h)^\perp$, for otherwise $h\in \lspan(Q)$, and because $h\in \lspan(Q)^\perp$, $\lspan(Q)$ would be degenerate, contradicting Condition~\ref{item:cond2}. Thus $L\not\subseteq \lspan(h)^\perp$. Let $P_{Q_i,D}^3(x)$ be a nonzero polynomial on $L$ such that if $P_{Q_i,D}^3(\overrightarrow{z})\not=0$, then $\overrightarrow{z}\not\in \lspan(h)^\perp$. If $\overrightarrow{z}\not\in \lspan(h)^\perp$, then $h\not\in \lspan(\overrightarrow{z})^\perp$, and hence  $\lspan(Q)^{\perp}\cap \lspan(D)\cap\lspan(\overrightarrow{z})^{\perp}=\{0\}$, because the only vectors which $\lspan(Q)^\perp$ and $\lspan(D)$ have in common are scalar multiples of $h$. Suppose next that $\lspan(Q)^\perp\cap \lspan(D)$ contains no nonzero vectors. Then clearly $\lspan(Q)^\perp \cap \lspan(D)\cap \lspan(\overrightarrow{z})^\perp=\{0\}$. Hence, if $P_{Q_i,D}^3(\overrightarrow{z})\not=0$, then $\lspan(Q)^{\perp}\cap \lspan(D)\cap \lspan(\overrightarrow{z})^{\perp}=\{0\}$. From $\lspan(Q)^{\perp}\cap \lspan(D)\cap \lspan(\overrightarrow{z})^{\perp}=\{0\}$ it follows that $\lspan(Q_i)^\perp\cap \lspan(D)$ has dimension at most one.

Next we show that
\begin{dy}
If $P^2_{Q\cup\{z\}}(\overrightarrow{z})\not=0$, then for each $Q_i$ and $Q_j$, $\lspan (Q_i)^\perp\cap \lspan(Q_j)$ has dimension at most one.
\end{dy}
If $P^2_{Q\cup\{z\}}(\overrightarrow{z})\not=0$, then $\lspan(Q\cup\{z\})$ is nondegenerate, and hence $\lspan(Q\cup\{z\})^\perp\cap \lspan(Q\cup\{z\}) = \{0\}$. From this it follows that $\lspan(Q\cup\{z\})^\perp\cap\lspan(Q_j)=\{0\}$, and hence $\lspan(Q_i)^\perp\cap\lspan(Q_j)$ has dimension at most one.

Next we show that:
\begin{dy*}
For each $k$-clique $C$ of $H$, there exists a nonzero polynomial $P^4_{C}(x)$ such that if $P^4_{C}(\overrightarrow{z})\not=0$, then $\overrightarrow{z}\not\in \lspan(C)$.
\end{dy*}
To see this, notice that, by Condition~\ref{item:cond3}, $\lspan(Q)^{\perp}\cap \lspan(C)$ is a proper subspace of $\lspan(Q)^{\perp}$, and so $L\cap \lspan(C)$ is a proper subspace of $L$. Hence there exists a nonzero polynomial $P^4_{C}(x)$ such that $P^4_{C}(\overrightarrow{z})\not=0$ if and only if $\overrightarrow{z}\not\in \lspan(C)$.

We next show that:
\begin{dy*}
For each $Q_i$ and vertex $w\not\in Q_i$, there exists a nonzero polynomial $P^4_{Q_i,w}(x)$ such that if $P^4_{Q_i,w}(\overrightarrow{z})\not=0$, then $\overrightarrow{w}\not\in\lspan(Q_i)$.
\end{dy*}
First we show that, under the condition that $P^4_C(\overrightarrow{z})\not=0$ for all $k$-cliques $C$ of $H$, $\overrightarrow{w}\not\in\lspan(Q_i)$ is equivalent to:
\begin{equation*}
\overrightarrow{z}\not\in \lspan((Q_i\setminus\{z\})\cup\{w\}).
\end{equation*}
To see this, suppose $\overrightarrow{z}\in \lspan((Q_i\setminus\{z\})\cup\{w\})$. Then there are scalars $a_j$, $j\not=i$, and $b$ such that $\overrightarrow{z} = \sum_{j\not=i} a_j \overrightarrow{v_j} + b \overrightarrow{w}$. If $b=0$, then $\overrightarrow{z}\in \lspan(Q_i\setminus\{z\})\subseteq \lspan(Q)$.
However, $\overrightarrow{z}\not\in \lspan(C)$ for all $k$-cliques $C$ of $H$, so $b\not=0$. This implies that $\overrightarrow{w}\in \lspan(Q_i)$. For the converse implication, let $\overrightarrow{w}\in \lspan(Q_i)$. Then there are scalars $a_j$, $j\not=i$, and $b$ such that $\overrightarrow{w}=\sum_{j\not=i} a_j \overrightarrow{v_j} + b\overrightarrow{z}$. If $b=0$, then $\overrightarrow{w}\in \lspan(Q_i\setminus \{z\})$. If $w\not=v_i$, then $\overrightarrow{w}\in \lspan(Q)$, which contradicts Condition~\ref{item:cond4}. If $w=v_i$, then $\dim(\lspan(Q))<k$, which contradicts Condition~\ref{item:cond2} for $H$. Hence $b\not=0$, and so $z\in \lspan((Q_i\setminus\{z\})\cup \{w\})$.

To show that there exists a nonzero polynomial $P^4_{Q_i,w}(x)$ such that if $P^4_{Q_i,w}(\overrightarrow{z})\not=0$, then $\overrightarrow{w}\not\in\lspan(Q_i)$, we need to show that $L\cap  \lspan((Q_i\setminus\{z\})\cup\{w\})$ is a proper subspace of $L$. This follows from
\begin{dy}\label{dy:prop1}
$\lspan(Q)^{\perp}\cap \lspan((Q_i\setminus\{z\})\cup \{w\})$ has dimension at most one. 
\end{dy}
To prove this, we first show that $\lspan((Q_i\setminus\{z\})\cup \{w\})$ has dimension $k$.
If $w \in V(H)$ and $w\not\in Q$, then $w\not\in \lspan(Q)$. Since $\lspan(Q)$ has dimension $k$, $\lspan(Q\cup \{w\})$ has dimension $k+1$. Hence $\lspan((Q_i\setminus\{z\})\cup \{w\})$ has dimension $k$. If $w=v_i$, then $\lspan((Q_i\setminus\{z\})\cup \{w\}) = \lspan(Q)$, and hence $\lspan((Q_i\setminus\{z\})\cup \{w\})$ has dimension $k$. Let $u_1,\ldots,u_{k-1}$ be an orthogonal basis of $\lspan(Q_i\setminus\{z\})$. Since $\lspan(Q_i\setminus\{z\})$ is nondegenerate, the orthogonal complement of $\lspan(Q_i\setminus\{z\})$ in $\lspan(Q)$ is nondegenerate. Hence there exists a nonzero vector $u_k$ such that $u_1,\ldots,u_k$ is an orthogonal basis of $\lspan(Q)$. We are now ready to prove $(\ref{dy:prop1})$. Let $x\in \lspan(Q)$. There there scalars $\alpha_1,\alpha_2,\ldots,\alpha_k$ such that $x=\sum\alpha_i u_i$. Since $x\in \lspan(Q_i\setminus\{z\})^\perp$, $B(x,u_j)=0$ for $j=1,\ldots,k-1$. So $x=\alpha_k u_k$. Hence $\lspan(Q)\cap \lspan(Q_i\setminus\{z\})^\perp$ has dimension at most one. By 
Lemma~\ref{lem:orth}, the dimension of $\lspan(Q)^\perp\cap \lspan((Q_i\setminus\{z\})\cup \{w\})$ has the same dimension as $\lspan(Q)\cap \lspan((Q_i\setminus\{z\})\cup\{w\})^\perp = \lspan(Q)\cap\lspan(Q_i\setminus\{z\})^\perp\cap\lspan(w)^\perp$. Hence $(\ref{dy:prop1})$ follows.

Since $L\cap \lspan((Q_i\setminus\{z\})\cup \{w\})$ is a proper subspace of $L$, there exists a nonzero polynomial $P^4_{Q_i,w}(x)$ such that $P^4_{Q_i,w}(\overrightarrow{z})\not=0$ if and only if $\overrightarrow{z}\not\in \lspan((Q_i\setminus\{z\})\cup\{w\})$. From  $\overrightarrow{z}\not\in \lspan((Q_i\setminus\{z\})\cup\{w\})$ it follows that $\overrightarrow{w}\not\in \lspan(Q_i)$.

Next we show that
\begin{dy}
If $|V(H)|<m$, then there exists a nonzero polynomial $P^5(x)$ such that if $P^5(\overrightarrow{z})\not=0$, then $\overrightarrow{z}\not\in\lspan(V(H))$.
\end{dy}
Suppose $|V(H)|<m$. It is clear that $\lspan(Q)\subseteq \lspan(V(H))$. If $\lspan(Q)^\perp\subseteq \lspan(V(H))$, then $\mathbb{F}^m = \lspan(Q)\oplus \lspan(Q)^\perp\subseteq \lspan(V(H))$, as $\lspan(Q)$ is nondegenerate, which is a contradiction. Hence, $\lspan(Q)^\perp\cap \lspan(V(H))$ is a proper subspace of $\lspan(Q)^\perp$. From this it follows that $L\cap \lspan(V(H))$ is a proper subspace of $L$. Thus, there exists a nonzero polynomial $P^5(x)$ on $L$ such that if $P^5(\overrightarrow{z})\not=0$, then $\overrightarrow{z}\not\in\lspan(V(H))$.

By Lemma~\ref{lem:polysubspacenonzero}, there exists a vector $\overrightarrow{z}\in L$ such that $B(\overrightarrow{z},\overrightarrow{z})\not=0$, $P^1(\overrightarrow{z})\not=0$, $P^2_C(\overrightarrow{z})\not=0$, $P^3_{Q_i,D}(\overrightarrow{z})\not=0$, $P^3_{Q_i,Q_j}(\overrightarrow{z})\not=0$, $P^4_C(\overrightarrow{z})\not=0$, $P^4_{Q_i,w}(\overrightarrow{z})\not=0$, $P^5(\overrightarrow{z})$. Thus Conditions~\ref{item:cond1}-~\ref{item:cond5} hold for $H'$.
By induction the theorem holds for every $k$-tree.
\end{proof}

\begin{cor}\label{cor:orth}
Let $\mathbb{F}$ be an infinite field with characteristic unequal to two, and $k$ and $m$ positive integers such that  $m\geq k+2$. Let $K$ be a nonsingular symmetric $m\times m$ matrix over $\mathbb{F}$. If $G$ is the complement of a partial $k$-tree with $n$ vertices, then there exists an $m\times n$ matrix $U$ with $\rank(U) = \min(m, n)$ such that $U^T K U\in S(G;\mathbb{F})$.
\end{cor}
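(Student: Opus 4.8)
The plan is to derive the corollary directly from Theorem~\ref{thm:orth1} by translating the language of $B$-orthogonal representations into matrix form. Let $\overline{G}$ be a partial $k$-tree on $n$ vertices, so that $G = \overline{\overline{G}}$ is its complement. Fix a nondegenerate symmetric bilinear form $B$ on $\mathbb{F}^m$ represented by the matrix $K$ in the standard basis, i.e.\ $B(x,y) = x^T K y$; this is legitimate since $K$ is nonsingular and symmetric, and the characteristic is not two. Apply Theorem~\ref{thm:orth1} to the partial $k$-tree $\overline{G}$ (note $k+2 \le m$ is exactly the hypothesis), obtaining a mapping $v \mapsto \overrightarrow{v}$ into $\mathbb{F}^m$ such that $B(\overrightarrow{v},\overrightarrow{w}) = 0$ iff $v \ne w$ and $v$ is adjacent to $w$ \emph{in $\overline{G}$}, and such that $\lspan(\{\overrightarrow{v} : v \in V\})$ has dimension $\min(m,n)$.

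Next I would assemble the vectors $\overrightarrow{v}$, $v = 1,\ldots,n$, as the columns of an $m \times n$ matrix $U$. Then the $(i,j)$ entry of $U^T K U$ is exactly $B(\overrightarrow{i},\overrightarrow{j})$. By condition~(1) of the theorem applied to $\overline{G}$, for $i \ne j$ this entry is zero iff $i$ and $j$ are adjacent in $\overline{G}$, equivalently nonzero iff $i$ and $j$ are adjacent in $G$; this is precisely the off-diagonal pattern required for membership in $S(G;\mathbb{F})$, and $U^T K U$ is automatically symmetric. For the rank claim, observe that the column space of $U$ is $\lspan(\{\overrightarrow{v} : v \in V\})$, so $\rank(U) = \min(m,n)$ by condition~(2). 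This completes the argument.

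The one point that deserves a sentence of care is the diagonal: membership in $S(G;\mathbb{F})$ imposes no constraint on the diagonal entries, so there is nothing further to check there — the theorem as stated does not even mention $B(\overrightarrow{v},\overrightarrow{v})$ for the final conclusion, though the stronger inductive statement in its proof keeps these entries anisotropic. Thus no obstacle arises; the corollary is essentially a restatement of the theorem once one recognizes that $U^T K U \in S(G;\mathbb{F})$ is the matrix-language version of ``$v \mapsto \overrightarrow{v}$ is a $B$-orthogonal representation of $G = \overline{\overline{G}}$'' together with the dimension bookkeeping. The only mild subtlety is remembering to invoke the theorem on the complement $\overline{G}$ rather than on $G$ itself, and to match the adjacency conventions (adjacent in $\overline{G}$ $\leftrightarrow$ non-adjacent in $G$) correctly.
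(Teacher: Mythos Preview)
Your proposal is correct and is precisely the intended derivation: the paper states Corollary~\ref{cor:orth} without proof because it is an immediate translation of Theorem~\ref{thm:orth1} into matrix language, exactly as you carry out. The only steps are applying the theorem to the partial $k$-tree $\overline{G}$, reading off the Gram matrix $U^T K U$, and matching the adjacency/nonadjacency and rank conditions---all of which you handle cleanly.
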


\begin{cor}
Let $\mathbb{F}$ be an infinite field with characteristic unequal to two and $k$ be a positive integer. If $G=(V,E)$ is the complement of partial $k$-tree, then $\mr(G; \mathbb{F})\leq k+2$.
\end{cor}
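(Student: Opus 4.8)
The plan is to deduce the corollary directly from Corollary~\ref{cor:orth}. Since $G$ is the complement of a partial $k$-tree, we may write $G=\overline{G'}$ where $G'$ is a partial $k$-tree on $n$ vertices. Set $m=k+2$; then $m\geq k+2$, so Corollary~\ref{cor:orth} applies once we exhibit a nonsingular symmetric $m\times m$ matrix $K$ over $\mathbb{F}$. The identity matrix $I_m$ works: it is symmetric and nonsingular over any field. Applying Corollary~\ref{cor:orth} with this choice of $K$ yields an $m\times n$ matrix $U$ with $U^T I_m U = U^T U\in S(G;\mathbb{F})$.

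The only remaining point is bookkeeping on the rank. The matrix $U^T U$ lies in $S(G;\mathbb{F})$ and is an $n\times n$ symmetric matrix, so $\mr(G;\mathbb{F})\leq \rank(U^T U)\leq \rank(U)$. By Corollary~\ref{cor:orth}, $\rank(U)=\min(m,n)=\min(k+2,n)\leq k+2$. Hence $\mr(G;\mathbb{F})\leq k+2$, which is the claim. (Note that if $n<k+2$ the bound is trivially true since $\mr(G;\mathbb{F})\leq n$ always; the point of invoking Corollary~\ref{cor:orth} is really the case $n\geq k+2$, where it produces a rank-$(k+2)$ matrix realizing the pattern.)

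I do not anticipate any genuine obstacle here: the corollary is an immediate specialization of the preceding one, obtained by choosing $m=k+2$ and $K=I_m$. The one small subtlety worth stating explicitly is that $\rank(U^T U)\le \rank(U)$ holds over any field (it follows from $\operatorname{col}(U^T U)\subseteq \operatorname{col}(U^T)$, equivalently from subadditivity/monotonicity of rank under multiplication), so no positive-definiteness or real-field argument is needed despite the suggestive notation $U^T U$.
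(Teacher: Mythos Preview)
Your proof is correct and is exactly the intended argument: the paper states this corollary without proof, as an immediate consequence of Corollary~\ref{cor:orth}, and your specialization $m=k+2$, $K=I_m$ together with the elementary bound $\rank(U^T K U)\le m$ is precisely how one fills in the details.
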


In the proof of Theorem~\ref{thm:pqeigenvalues}, we need the following lemma; its proof is standard and skipped.

\begin{lemma}\label{lem:pq}
Let $A$ be a real symmetric $m\times m$ matrix and let $U$ be a real  $m\times n$ matrix with $\rank(U)=m$. If $A$ has $p$ positive and $q$ negative eigenvalues, then $U^T A U$ has $p$ positive and $q$ negative eigenvalues.
\end{lemma}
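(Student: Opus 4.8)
\textbf{Proof of Lemma~\ref{lem:pq}.} The plan is to reduce the statement to Sylvester's law of inertia for congruent symmetric matrices. Since $\rank(U)=m$ and $U$ is $m\times n$, necessarily $m\le n$ and $U$ has full row rank, so its rows span $\mathbb{R}^m$. Consequently there is an invertible $n\times n$ matrix $Q$ with $UQ=[\,I_m\ \ 0\,]$, where the block $0$ is $m\times(n-m)$: one obtains $Q$ by choosing a basis of $\mathbb{R}^n$ whose first $m$ vectors are mapped by $U$ to the standard basis of $\mathbb{R}^m$ (possible because $U$ is surjective) and whose remaining $n-m$ vectors span $\ker U$.

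Next I would compute the congruence transform of $U^TAU$ by $Q$. We have
\begin{equation*}
Q^T(U^TAU)Q=(UQ)^TA(UQ)=\begin{bmatrix}I_m\\ 0\end{bmatrix}A\begin{bmatrix}I_m& 0\end{bmatrix}=\begin{bmatrix}A& 0\\ 0& 0\end{bmatrix},
\end{equation*}
a block-diagonal symmetric matrix whose nonzero block is exactly $A$. This matrix has $p$ positive eigenvalues, $q$ negative eigenvalues, and $n-m$ zero eigenvalues, since its eigenvalues are those of $A$ together with $n-m$ copies of $0$.

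Finally, because $Q$ is invertible, $U^TAU$ and $Q^T(U^TAU)Q$ are congruent real symmetric matrices, so by Sylvester's law of inertia they have the same numbers of positive and of negative eigenvalues. Hence $U^TAU$ has $p$ positive and $q$ negative eigenvalues, as claimed. There is no real obstacle here; the only point requiring a moment's care is extracting the rank hypothesis correctly (full row rank, $m\le n$) so that the matrix $Q$ above exists, after which the argument is the standard congruence/inertia computation. $\qquad\square$
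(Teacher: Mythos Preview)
Your proof is correct; the reduction via an invertible $Q$ with $UQ=[I_m\ 0]$ and Sylvester's law of inertia is exactly the standard argument. The paper itself omits the proof as standard, so there is nothing further to compare.
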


\begin{thm}\label{thm:pqeigenvalues}
Let $G$ be a graph with $n$ vertices and let $k$ be a positive integer. If $k+2\leq m\leq n$ and $G$ is the complement of a partial $k$-tree, then for any two nonnegative integers $p$ and $q$ with $p+q=m$, there exists a matrix in $S(G;\reals)$ with 
$p$ positive and $q$ negative eigenvalues. 
\end{thm}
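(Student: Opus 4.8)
The plan is to combine Corollary~\ref{cor:orth} with Lemma~\ref{lem:pq}, specializing to $\mathbb{F}=\mathbb{R}$. First I would fix the target inertia $(p,q)$ with $p+q=m$ and let $K$ be the diagonal $m\times m$ matrix with $p$ entries equal to $+1$ and $q$ entries equal to $-1$; this is a nonsingular symmetric matrix over $\mathbb{R}$ with exactly $p$ positive and $q$ negative eigenvalues. Since $\mathbb{R}$ is an infinite field of characteristic $\neq 2$ and $m\geq k+2$, Corollary~\ref{cor:orth} applies to the complement $G$ of a partial $k$-tree on $n$ vertices and produces an $m\times n$ matrix $U$ with $\rank(U)=\min(m,n)$ such that $U^TKU\in S(G;\mathbb{R})=S(G)$.

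Next I would verify that $\rank(U)=m$ under the hypothesis $m\leq n$: indeed $\rank(U)=\min(m,n)=m$ precisely because $m\leq n$. This is exactly the hypothesis needed to invoke Lemma~\ref{lem:pq} with $A=K$ and this $U$. Applying Lemma~\ref{lem:pq}, the matrix $U^TKU$ has the same number of positive and negative eigenvalues as $K$, namely $p$ positive and $q$ negative eigenvalues. Since $U^TKU\in S(G)$, this exhibits the desired matrix and completes the proof.

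There is essentially no obstacle here; the theorem is a direct corollary. The only point requiring a moment's care is the bookkeeping that the condition ``$U$ has full row rank $m$'' in Lemma~\ref{lem:pq} matches ``$\rank(U)=\min(m,n)$ and $m\leq n$'' from Corollary~\ref{cor:orth} — this is where the hypothesis $m\leq n$ (as opposed to merely $m\geq k+2$) enters and is used. Everything else is a matter of choosing $K$ to realize the prescribed inertia, which is immediate for real symmetric matrices by taking a diagonal sign matrix.

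\begin{proof}
Let $p,q$ be nonnegative integers with $p+q=m$. Let $K$ be the $m\times m$ diagonal matrix whose first $p$ diagonal entries equal $1$ and whose remaining $q$ diagonal entries equal $-1$. Then $K$ is a nonsingular symmetric real matrix with exactly $p$ positive and $q$ negative eigenvalues. Since $\mathbb{R}$ is an infinite field with characteristic unequal to two and $m\geq k+2$, Corollary~\ref{cor:orth} yields an $m\times n$ matrix $U$ with $\rank(U)=\min(m,n)$ such that $U^TKU\in S(G;\mathbb{R})=S(G)$. Because $m\leq n$, we have $\rank(U)=\min(m,n)=m$, so $U$ has full row rank. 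By Lemma~\ref{lem:pq}, $U^TKU$ has the same number of positive eigenvalues and the same number of negative eigenvalues as $K$, that is, $p$ positive and $q$ negative eigenvalues. Thus $U^TKU$ is a matrix in $S(G;\reals)$ with $p$ positive and $q$ negative eigenvalues, as required.
\end{proof}
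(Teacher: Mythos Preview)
Your proof is correct and follows essentially the same approach as the paper: choose a diagonal sign matrix with the prescribed inertia, apply Corollary~\ref{cor:orth} to obtain $U$ with $U^TKU\in S(G)$, and then invoke Lemma~\ref{lem:pq}. If anything, you are slightly more explicit than the paper in spelling out why $\rank(U)=m$ follows from $m\leq n$.
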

\begin{proof}
Let $D$ be a $m \times m$ diagonal matrix with $p$ and $q$ diagonal entries equal to $+1$ and $-1$, respectively.
By Corollary~\ref{cor:orth}, there exists an $m\times n$ matrix $U$ with $\rank(U) = m$ such that $A = U^T D U\in S(G)$. By Lemma~\ref{lem:pq}, $A$ has $p$ positive and $q$ negative eigenvalues. 
\end{proof}

\bibliographystyle{plain}
\bibliography{../../../biblio}

\begin{thebibliography}{10}

\bibitem{BarHalLoe2008a}
W.~Barrett, H.~T. Hall, and R.~Loewy.
\newblock The inverse inertia problems for graphs: Cut vertices, trees, and a
  counterexample.
\newblock {\em Linear Agebra Appl.}, 431(8):1147--1191, 2009.

\bibitem{MR2111528}
W.~Barrett, H.~van~der Holst, and R.~Loewy.
\newblock Graphs whose minimal rank is two.
\newblock {\em Electron. J. Linear Algebra}, 11:258--280 (electronic), 2004.

\bibitem{MR2202431}
W.~Barrett, H.~van~der Holst, and R.~Loewy.
\newblock Graphs whose minimal rank is two: the finite fields case.
\newblock {\em Electron. J. Linear Algebra}, 14:32--42 (electronic), 2005.

\bibitem{bodlaender98a}
H.L. Bodlaender.
\newblock A partial {$k$}-arboretum of graphs with bounded treewidth.
\newblock {\em Theoret. Comput. Sci.}, 209(1-2):1--45, 1998.

\bibitem{MR1417938}
D.~Cox, J.~Little, and D.~O'Shea.
\newblock {\em Ideals, varieties, and algorithms}.
\newblock Undergraduate Texts in Mathematics. Springer-Verlag, New York, second
  edition, 1997.

\bibitem{Diestel}
R.~Diestel.
\newblock {\em Graph Theory}.
\newblock Springer-Verlag, New York, second edition, 2000.

\bibitem{Fallat2007558}
S.M. Fallat and L.~Hogben.
\newblock The minimum rank of symmetric matrices described by a graph: A
  survey.
\newblock {\em Linear Algebra Appl.}, 426(2-3):558 -- 582, 2007.

\bibitem{AIMMinimumRankSpecialGraphsWorkGroup20081628}
AIM Minimum Rank Special Graphs~Work Group.
\newblock Zero forcing sets and the minimum rank of graphs.
\newblock {\em Linear Algebra Appl.}, 428(7):1628 -- 1648, 2008.

\bibitem{Hogben20082560}
L.~Hogben.
\newblock Orthogonal representations, minimum rank, and graph complements.
\newblock {\em Linear Algebra Appl.}, 428(11-12):2560 -- 2568, 2008.

\bibitem{JD99}
C.R. Johnson and A.~Leal Duarte.
\newblock The maximum multiplicity of an eigenvalue in a matrix whose graph is
  a tree.
\newblock {\em Linear and Multilinear Algebra}, 46:139--144, 1999.

\bibitem{HolSin2010a}
J.~Sinkovic and H.~van~der Holst.
\newblock The minimum semidefinite rank of the complement of partial
  {$k$}-trees.
\newblock {\em Linear Algebra Appl.}, 434(6):1468--1474, 2011.

\end{thebibliography}

\end{document}